\newcommand{\defi}[1]{\textsf{#1}} 
\newcommand{\Aff}{\mathbb{A}}
\newcommand{\C}{\mathbb{C}}
\newcommand{\G}{\mathbb{G}}
\newcommand{\PP}{\mathbb{P}}
\newcommand{\Q}{\mathbb{Q}}
\newcommand{\R}{\mathbb{R}}
\newcommand{\Z}{\mathbb{Z}}
\newcommand{\Qbar}{{\overline{\Q}}}
\newcommand{\kbar}{{\overline{k}}}
\newcommand{\calB}{\mathcal{B}}
\newcommand{\calH}{\mathcal{H}}
\newcommand{\MM}{\mathscr{M}}
\DeclareMathOperator{\Aut}{Aut}
\DeclareMathOperator{\Div}{Div}
\DeclareMathOperator{\NS}{NS}
\DeclareMathOperator{\Pic}{Pic}
\DeclareMathOperator{\PIC}{\textbf{Pic}}
\DeclareMathOperator{\specialK}{special}
\DeclareMathOperator{\id}{id}
\newcommand{\Directsum}{\bigoplus} 
\newcommand{\injects}{\hookrightarrow}
\newcommand{\intersect}{\cap} 
\newcommand{\pairing}{\langle \;,\; \rangle}
\renewcommand{\setminus}{-} 
\newcommand{\tensor}{\otimes} 
\newcommand{\Union}{\bigcup} 
\theoremstyle{plain}
\newtheorem{theorem}[subsection]{Theorem}
\newtheorem{proposition}[theorem]{Proposition}
\newtheorem{lemma}[theorem]{Lemma}
\newtheorem{corollary}[theorem]{Corollary}
\theoremstyle{definition}
\newtheorem{example}[theorem]{Example}
\newtheorem{question}[theorem]{Question}
\newtheorem{conjecture}[theorem]{Conjecture} 
\newtheorem{notation}[theorem]{Notation}
\newtheorem{ideaofproof}[theorem]{Idea of proof}
\theoremstyle{remark}
\newtheorem{remark}[theorem]{Remark}
\numberwithin{equation}{section}
\g@addto@macro\bfseries{\boldmath} 
\def\listtodoname{List of Todos}
\def\listoftodos{\@starttoc{tdo}\listtodoname}
\begin{document}

\title[Obstructions to applying the Baker--Bilu method]{Obstructions to applying the Baker--Bilu method for determining integral points on curves}
\subjclass[2020]{Primary 14H30; Secondary 11G30, 11G50, 11J86, 14H40}
\keywords{Baker's method, integral point, finite \'etale cover, height}

\author{Aaron Landesman}
\address{Department of Mathematics, Massachusetts Institute of Technology, Cambridge, MA 02139-4307, USA}

\author{Bjorn Poonen}
\address{Department of Mathematics, Massachusetts Institute of Technology, Cambridge, MA 02139-4307, USA}
\email{poonen@math.mit.edu}
\urladdr{\url{http://math.mit.edu/~poonen/}}

\thanks{
B.P.\ was supported in part by National Science Foundation grant DMS-1601946 and Simons Foundation grants \#402472 and \#550033.}

\date{June 14, 2023}

\begin{abstract}
We prove that for every smooth projective integral curve $X$ of genus at least $2$ over $\C$, there exists $x \in X(\C)$ such that no connected finite \'etale cover of $X-\{x\}$ admits a nonconstant morphism to $\G_m$.
This has implications for the applicability of Baker's method to determining integral points on curves.
\end{abstract}

\maketitle

\section{Introduction}\label{S:introduction}

It is not known if there is an algorithm to find all the integer solutions to an arbitrary polynomial equation in two variables.
More generally, one can ask about solutions in a ring of $S$-integers of a number field.
Equivalently, one can ask about effectively bounding the height of $S$-integral points on an affine curve $U$.
One can reduce to the case where $U$ is smooth and geometrically integral; then $U = X \setminus R$ 
for some nice curve $X$ and some finite set $R$ of closed points of $X$. (Here, ``nice'' means smooth, projective, and geometrically integral.)
Let $g$ be the genus of $X$.
We assume that the Euler characteristic $\chi(U) = 2-2g-\#R$ is negative; in this case, the set of $S$-integral points is finite by Siegel's theorem~\cite{Siegel1929}, but the question is whether the result can be made effective.

Baker's method together with Dirichlet's $S$-unit theorem handles all cases with $g \le 1$ 
\cite{Baker1966I-IV,Baker1967I,Baker1967II,Baker1968,Baker-Coates1970}.
It also handles some $(X,R)$ with $g \ge 2$, such as those in which $X$ is a cyclic cover of $\PP^1$ and $R$ is an orbit of $\Aut(X/\PP^1)$ \cite{Baker1969}.
Bilu~\cite[Theorem~E]{Bilu1995} generalized Baker's approach to handle all cases in which $U$ after base field extension has a connected finite \'etale cover with a nondegenerate morphism to $\G_m \times \G_m$; here \defi{nondegenerate} means that the image is not contained in a coset of a proper algebraic subgroup.
Bilu's theorem begs the question:

\begin{question}
\label{question:Bilu}
Does \emph{every} smooth integral affine curve of genus at least $2$ over $\Qbar$ admit a connected finite \'etale cover with a nondegenerate morphism to $\G_m \times \G_m$?
\end{question}

We conjecture that the answer is \emph{no}.
In fact, we conjecture the following stronger statement:

\begin{conjecture}
\label{conjecture:not-one-morphism}
For \emph{any} nice curve $X$ of genus at least $2$, there exists $x \in X(\Qbar)$ such that the affine curve $X \setminus \{x\}$ has no connected finite \'etale cover with even a \emph{single} nonconstant morphism to $\G_m$.
\end{conjecture}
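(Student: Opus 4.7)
A nonconstant morphism $U \to \G_m$ from a connected smooth affine curve $U$ corresponds to a nonconstant element of $\calO(U)^*/\C^*$, equivalently a rational function on the smooth compactification $\bar U$ with divisor supported on $\bar U \setminus U$. Applied to a connected finite \'etale cover $U \to X \setminus \{x\}$ with compactification $\pi : \bar U \to X$ and $T := \pi^{-1}(x)$, this says that such a morphism exists if and only if some divisor $\sum_{P \in T} n_P \cdot P$ with $(n_P) \neq 0$ and $\sum_P n_P = 0$ is principal on $\bar U$. Equivalently, the classes of the points of $T$ in $\Pic^0(\bar U)$ (relative to a chosen base point in $T$) are $\Z$-linearly dependent.

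\textbf{Family of covers and bad loci.} The topological types of connected branched covers of $X$ ramified over a single point form a countable set: each is encoded by a finite-index subgroup of $\pi_1^{\mathrm{top}}(X \setminus \{x_0\})$ together with a conjugacy class of monodromy around the puncture. For each such type $c$ and each nonzero integer vector $(n_P) \in \Z^T$ with $\sum n_P = 0$, the plan is to study the closed algebraic locus
\[
B_{c,n} := \Bigl\{ x \in X : \textstyle\sum_{P \in T_x} n_P \cdot P \text{ is principal in } \bar U_{c,x} \Bigr\},
\]
where $\bar U_{c,x} \to X$ is the cover of type $c$ ramified at $x$. This is well-defined via the global family $\bar \calU_c \to X \times X$ and its relative Jacobian.

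\textbf{Main step: properness of each bad locus.} The core difficulty is showing $B_{c,n} \subsetneq X$ for every $(c, n)$. Suppose $B_{c,n} = X$, so $\sum_P n_P P_x$ is principal for all $x$ in the relative Picard scheme over $X$. I would attempt a variational argument: differentiating the identity via the Gauss-Manin connection converts the vanishing into a relation (weighted by ramification indices) among the values of global $1$-forms on $\bar U_{c,x}$ at the preimage points. A monodromy-theoretic input --- using that the monodromy of $\bar \calU_c \to X$ acts nontrivially (and typically transitively) on $T_x$ for non-totally-ramified covers --- should then derive a contradiction with the independence of these $1$-forms. Alternatively, one might argue via the Noether-Lefschetz locus of the associated variation of Hodge structure. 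This step will be the main obstacle, and likely requires separate treatment of the degenerate covers (for example, covers of totally-ramified type at $x$ have $|T_x| = 1$ and contribute no relation, and must be excluded first).

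\textbf{Conclusion and the $\Qbar$ issue.} Once each $B_{c,n}$ is a proper subvariety of $X$, it is finite since $\dim X = 1$. Hence the total bad set $\bigcup_{c,n} B_{c,n}$ is countable; any $x \in X(\C)$ outside it proves the $\C$-version of the statement. Upgrading to Conjecture~\ref{conjecture:not-one-morphism} requires showing this countable set does not exhaust the countable set $X(\Qbar)$, a Hilbert-irreducibility-style question whose resolution presumably requires either effective bounds on the heights of the bad points or a finer understanding of the Galois action on $\bigcup_{c,n} B_{c,n}$; this arithmetic refinement appears to be genuinely harder than the geometric step above.
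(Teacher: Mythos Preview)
The statement you are attempting is labeled a \emph{conjecture} in the paper and is not proved there. The paper establishes only the $\C$-analogue (via countability of $X(K)_{\specialK}$) and, as a partial arithmetic step, that $X(\Qbar)_{\specialK,G}$ is of bounded height for each fixed finite group $G$. Your final paragraph correctly identifies the remaining gap: a countable union of finite bad loci could in principle exhaust the countable set $X(\Qbar)$, and closing this is precisely what the paper leaves open.

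For the part that \emph{is} proved, your architecture matches the paper's: index covers by countably many combinatorial types, define for each type and each relation vector a closed bad locus, and show properness. The substantive divergence is in your ``main step.'' The paper uses neither Gauss--Manin nor monodromy nor Noether--Lefschetz. It works instead on the total space $S$ of the universal curve over a component $M$ of the moduli of $G$-covers, with section images $D_1,\dots,D_n$ and fiber class $F$, and computes the intersection matrix directly: $D_i\cdot D_j=0$ for $i\ne j$, $D_i\cdot F=1$, $F^2=0$, and --- by pulling back the diagonal class on $X\times X$ --- the decisive identity $D_i^2=(\deg\mu)(2-2g)/e<0$ (here $g\ge 2$ is essential). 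The matrix is then nonsingular, so $D_1,\dots,D_n$ are $\Z$-independent in $\Pic(S_\eta)$, and every bad locus is a proper closed subset.

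Your variational route is not clearly complete as written: transitivity of monodromy on $T_x$ only shows that the space of relation vectors holding identically over $X$ is stable under the permutation action, not that it is zero --- averaging a degree-zero vector over a transitive orbit gives the zero vector, which is no contradiction. The intersection-theoretic argument bypasses this entirely and, moreover, feeds directly into the Silverman-specialization proof of bounded height, which a Hodge-theoretic approach would not obviously supply.
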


As evidence, we prove this statement with $\C$ in place of $\Qbar$:

\begin{theorem}
\label{theorem:no-finite-etale-cover-over-C}
Let $X$ be a nice curve of genus at least $2$ 
over $\C$.
Then there exists $x \in X(\C)$ such that $X\setminus \{x\}$ has no connected finite \'etale cover with a nonconstant morphism to $\G_m$.
\end{theorem}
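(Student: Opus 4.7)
The plan is to prove the theorem by showing that the set of ``bad'' points $x \in X(\C)$---those for which the conclusion fails---is countable; since $X(\C)$ is uncountable, a good $x$ must then exist.

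First I would rephrase the condition in terms of Jacobians. Let $\pi \colon \tilde V \to X$ be the smooth projective completion of a connected finite \'etale cover $V \to X \setminus \{x\}$, with $\pi^{-1}(x) = \{y_1,\ldots,y_k\}$. Via the standard exact sequence of units and Picard groups for $V \subset \tilde V$, the existence of a nonconstant morphism $V \to \G_m$ is equivalent to the existence of a nonzero tuple $\mathbf n = (n_1,\ldots,n_k) \in \Z^k$ with $\sum n_i = 0$ and $\sum n_i [y_i] = 0$ in $\Jac(\tilde V)$.

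Next I would parametrize covers. Connected finite \'etale covers of $X \setminus \{x\}$ correspond to conjugacy classes of surjections $\pi_1(X \setminus \{x\}) \cong F_{2g} \twoheadrightarrow G$ onto finite groups, yielding a countable set of topological types $\tau$. For each $\tau$, the corresponding Hurwitz space $B_\tau \to X$, parametrizing pairs $(x, \text{cover of type }\tau)$, is a finite \'etale cover of $X$, hence a disjoint union of smooth complex curves each dominating $X$. After passing to a further finite cover of $B_\tau$ to order the $k$ preimages of the branch point, we obtain sections $\sigma_1, \ldots, \sigma_k \colon B_\tau \to \mathcal V$ of the universal family $\mathcal V \to B_\tau$. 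For each nonzero $\mathbf n$ with $\sum n_i = 0$, let $Z_{\tau,\mathbf n} := \{b \in B_\tau : \sum n_i [\sigma_i(b)] = 0 \in \Jac(\tilde V_b)\}$. The bad set in $X(\C)$ is then the union over all $(\tau, \mathbf n)$ of the images of $Z_{\tau, \mathbf n}$ in $X$.

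The crux of the argument---the step I expect to be the main obstacle---is to show that $Z_{\tau,\mathbf n}$ is a \emph{proper} analytic subset of each irreducible component of $B_\tau$. Granted this, $Z_{\tau, \mathbf n}$ is finite (since $B_\tau$ is $1$-dimensional), its image in $X$ is finite, and the countable union over $(\tau, \mathbf n)$ is countable, completing the proof. To establish properness, I would argue by contradiction: if the section $s_{\mathbf n}(b) := \sum n_i [\sigma_i(b)]$ of the relative Jacobian $\mathcal J / B_\tau$ vanished on an entire component $B^\circ$, I would compute its first-order variation at some $b_0 \in B^\circ$ with branch point $x_0$. In local coordinates $u$ on $X$ near $x_0$ and $t_i$ near $y_i = \sigma_i(b_0)$ with $\pi_{b_0}$ expressed as $u = t_i^{e_i}$, the deformation of $(\tilde V_{b_0}, \{y_i\})$ induced by $\partial_u$ is governed by a Kodaira--Spencer class, and the derivative of $s_{\mathbf n}$, viewed in $T_0 \Jac(\tilde V_{b_0}) = H^1(\tilde V_{b_0}, \OO)$, pairs against $\omega \in H^0(\tilde V_{b_0}, \Omega^1)$ by an explicit residue-type formula in the Taylor coefficients of $\omega$ at the $y_i$, weighted by $n_i$ and $e_i$. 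Because $g(\tilde V_{b_0}) \geq g \geq 2$, the space $H^0(\tilde V_{b_0}, \Omega^1)$ should have enough flexibility to choose $\omega$ making the pairing nonzero whenever $\mathbf n \neq 0$, yielding the desired contradiction.
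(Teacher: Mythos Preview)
Your overall strategy matches the paper's: reduce to showing the bad locus is countable, parametrize covers via a Hurwitz-type moduli space fibered over $X$, and show that for each fixed relation $\mathbf n$ the locus $Z_{\tau,\mathbf n}$ is a proper closed subset of a curve, hence finite. The paper carries out exactly this reduction (after first passing to Galois covers).

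Where you diverge is at the crux step. The paper does not attempt a Kodaira--Spencer computation; instead it works globally on the total space of the universal curve $S \to M$ (your $\mathcal V \to B_\tau$). The sections $\sigma_i$ give divisors $D_i \subset S$, and a relation $\sum n_i y_i \sim 0$ holding identically on $M$ would force a relation among $D_1,\dots,D_n$ and the fiber class $F$ in $\NS(S)$. The paper rules this out by computing the intersection matrix: $D_i \cdot D_j = 0$ for $i \ne j$, $D_i \cdot F = 1$, $F^2=0$, and crucially $D_i^2 = (\deg\mu)(2-2g)/e < 0$, obtained by pulling back the diagonal class of $X \times X$ along a commutative diagram. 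The determinant is $-n d^{n-1} \ne 0$, and $g(X) \ge 2$ enters solely through the sign of $2-2g$.

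Your infinitesimal approach has a genuine gap at the sentence ``$H^0(\tilde V_{b_0},\Omega^1)$ should have enough flexibility.'' That is exactly the content to be proved, and the $G$-symmetry makes it non-obvious: the $y_i$ form a single $G$-orbit with common ramification index, so every $G$-invariant $\omega$ (in particular every form pulled back from $X$) pairs to zero with $\sum n_i(\cdot)$ automatically since $\sum n_i = 0$. You would need to control the \emph{non-invariant} isotypic pieces of $H^0(\Omega^1)$ and their jets at the $y_i$, while simultaneously tracking the Kodaira--Spencer contribution from the moving curve---a Chevalley--Weil type analysis that is not covered by the bare inequality $g(\tilde V_{b_0}) \ge 2$. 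The paper's intersection-theoretic argument sidesteps this completely.
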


Thus if, for some $g \ge 2$, \autoref{question:Bilu} has a positive answer for all curves of genus $g$ over $\Qbar$, it cannot be because of a universal algebraic construction of a cover of degree depending only on $g$, as was the case for $g=1$ --- instead the degree would have to depend on the height of the curve as well, as happens in Belyi's theorem \cite{Belyi1979}.

\begin{remark}
For a smooth integral affine curve $U = X \setminus  R$ as above,
having a nonconstant morphism $U \to \G_m$
is equivalent to having a nontrivial integer relation 
between the classes of the points of $R$ in $\Pic(X)$.
\end{remark}

For any nice curve $X$ of genus at least $2$ 
over an algebraically closed field $K$,
let $X(K)_{\specialK}$ be the set of $x \in X(K)$ such that $X\setminus \{x\}$ has a
connected finite \'etale cover with a nonconstant morphism to $\G_m$.
\autoref{theorem:no-finite-etale-cover-over-C} is an immediate consequence of
the following, which we prove in \autoref{S:proof of theorem}.

\begin{theorem} 
\label{theorem:special-set-is-countable}
Let $K$ be an algebraically closed field of characteristic~$0$.
Let $X$ be a nice curve of genus at least $2$ over $K$.
Then $X(K)_{\specialK}$ is countable.
\end{theorem}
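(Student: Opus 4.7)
The plan is to exhibit $X(K)_{\specialK}$ as a countable union of finite sets. By the remark preceding the theorem, $x \in X(K)_{\specialK}$ iff there exists a connected finite cover $\bar\pi \colon \bar Y \to X$ of some degree $d$, branched only over $x$, with $\bar\pi^{-1}(x) = \{y_1, \ldots, y_r\}$, and a nonzero integer vector $(n_i)$ such that $\sum n_i [y_i] = 0$ in $\Pic(\bar Y)$. Comparing degrees of principal divisors forces $\sum n_i = 0$, so $r \geq 2$. Writing $n_1 = -\sum_{i \geq 2} n_i$, the witnessing data is $(d, \tau, n)$ where $\tau$ is a topological type of cover and $n = (n_2, \ldots, n_r) \in \Z^{r-1} \setminus \{0\}$; this set of data is countable.

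For each $(d,\tau)$, let $H_{d,\tau}$ denote the Hurwitz moduli space of triples $(x, \bar Y, \bar\pi)$ of the given type. By Hurwitz theory, $H_{d,\tau}$ is finite \'etale over $X$, and hence is a smooth $1$-dimensional $K$-variety. Working over each connected component $H$ of $H_{d,\tau}$ (passing to a further \'etale cover to order the preimages if necessary), the universal family $\mathcal Y \to H$ carries sections $s_1, \ldots, s_r \colon H \to \mathcal Y$ marking the preimages of the branch point. In the relative Jacobian $J := \Pic^0(\mathcal Y / H)$, define $\sigma_i := [s_i - s_1] \in J(H)$ for $i \geq 2$. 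The subscheme
\[
B_{d,\tau,n} := \Big\{ h \in H : \sum_{i \geq 2} n_i \sigma_i(h) = 0 \in J_h \Big\}
\]
is closed in $H$, and $X(K)_{\specialK}$ is contained in the union of the images of the $B_{d,\tau,n}$ in $X$ over all countably many $(d,\tau,n)$.

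The main step, also the main obstacle, is to show that $B_{d,\tau,n}$ is a proper subset of $H$ for each $(d,\tau,n)$. Granted this, $B_{d,\tau,n}$ is zero-dimensional and has only finitely many $K$-points, so $X(K)_{\specialK}$ is a countable union of finite sets, hence countable.

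To establish the main step, the plan is to show that the section $\sum n_i \sigma_i \colon H \to J$ is not identically zero by computing its derivative at a suitable $h_0 \in H$. Via the infinitesimal Abel--Jacobi map for the family $\mathcal Y \to H$ equipped with the Gauss--Manin connection, the derivative $d\sigma_i(v) \in T_0 J_{h_0} = H^1(Y_{h_0}, \mathcal O)$ for $v \in T_{h_0} H$ is, by Serre duality, a linear functional on $H^0(Y_{h_0}, \Omega^1)$ computable locally at $y_i$: in the local model $s = h + t^{e_i}$ of $\bar\pi$ near $y_i$, it takes the form of a residue-type expression in $\omega$. For generic $h_0 \in H$, these functionals (for $i = 2, \ldots, r$) are expected to be $K$-linearly independent, by the Brill--Noether fact that $r$ sufficiently general points on a curve of genus $g(Y_{h_0}) \geq r$ (guaranteed by Riemann--Hurwitz using $g(X) \geq 2$) impose $r$ independent conditions on holomorphic differentials. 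Linear independence of the functionals yields $\sum n_i \, d\sigma_i(v) \neq 0$ at $h_0$ for every nonzero $n \in \Z^{r-1}$, so $\sum n_i \sigma_i$ is non-constant on $H$ and in particular not identically zero. The technical verification of the infinitesimal Abel--Jacobi formula, especially at the ramified preimages, is the main place that requires care.
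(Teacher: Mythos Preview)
Your overall framework is correct and essentially parallels the paper's: reduce to countably many discrete parameters (the paper indexes by a finite group $G$ via Galois closure, you by $(d,\tau,n)$), build a Hurwitz-type moduli curve $H$ with universal cover and labeled sections, and show that the locus where a fixed integer relation holds among the sections is a proper closed subscheme of $H$, hence finite. So the architecture is fine; the issue is entirely in your ``main step.''

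The gap is in your justification that $\sum_{i\ge 2} n_i\sigma_i$ is not identically zero. You propose to show its derivative is nonzero at some $h_0$ by arguing that the functionals $d\sigma_i(v)$ on $H^0(Y_{h_0},\Omega^1)$ are linearly independent, invoking the Brill--Noether principle that $r$ \emph{general} points impose independent conditions on holomorphic differentials. But the points $y_1,\ldots,y_r$ are anything but general: they constitute the entire fiber $\bar\pi^{-1}(x)$ of a fixed map to $X$. In the \'etale case this fiber is $\bar\pi^*(x)$ itself, and covers for which $h^0(Y,\calO_Y(\bar\pi^*(x)))\ge 2$ (so that these points fail to impose independent conditions) are precisely the Prill-exceptional covers the paper mentions in its introduction---and such covers are known to exist over every genus~$2$ base. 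Even when $h^0=1$, the derivative functionals at ramified preimages involve a nontrivial Kodaira--Spencer contribution that you have not computed; your own parenthetical (``the main place that requires care'') flags exactly the place where the argument is not yet a proof. As written, the independence claim is an expectation, not a theorem.

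The paper avoids this infinitesimal analysis entirely. On the total space $S$ of the universal curve over (a component $M$ of) the moduli space, it computes the intersection matrix of the section-divisors $D_1,\ldots,D_n$ together with a fiber class $F$: one has $D_i\cdot D_j=0$ for $i\ne j$ and $D_i\cdot F=1$, and the key input is $D_i^2=(\deg\mu)(2-2g)/e<0$, obtained by pulling back the diagonal $\Delta\subset X\times X$ (with $\Delta^2=2-2g$) along the natural map $S\to X\times X$ and recognizing the pullback as $e\sum_j D_j$. The resulting matrix has determinant $-n d^{n-1}\ne 0$, so $D_1,\ldots,D_n,F$ are independent in $\NS(S)$, hence $D_1,\ldots,D_n$ are independent in $\Pic(S_\eta)$, which is exactly your ``main step.'' If you want to salvage your approach, replace the differential/general-position argument with this intersection computation; the rest of your outline then goes through.
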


One approach to proving a negative answer to \autoref{question:Bilu} (over $\Qbar$) might be to prove that for a nice curve $X$ of genus at least $2$ over $\Qbar$, the set $X(\Qbar)_{\specialK}$ is of bounded height.
Our \autoref{theorem:bounded-height} implies the slightly weaker result that the set of $x \in X(\Qbar)$ such that $X\setminus \{x\}$ admits such a cover of degree $\le d$ is a set of bounded height.
Another approach to \autoref{question:Bilu} might be to prove that $X(\Qbar)_{\specialK}$ is not $p$-adically dense in $X(\Qbar)$.
Perhaps $X(\Qbar)_{\specialK}$ is even \emph{finite} for most $X$.

\begin{example}[Ihara, Tamagawa]
If $X$ is of genus~$2$, then $X(\Qbar)_{\specialK}$ is infinite, because it contains the set called $M$ just before Proposition~6.1 in \cite{Bilu1995}.
(Bilu received this example in a letter from Yasutaka Ihara, who wrote that it emerged in a discussion with Akio Tamagawa.)
\end{example}


\begin{remark}
	\label{remark:}
	\autoref{theorem:no-finite-etale-cover-over-C} is related to a
	number of other conjectures in algebraic geometry and geometric topology.

	Prill's problem asked whether there is a nice curve $X$ of genus at
	least $2$ and a finite cover $f\colon Y \to X$ such that $H^0(Y, \mathscr
	O_Y(f^{-1}(x))) \geq 2$ for every $x \in X$ \protect{\cite[p. 268, Chapter VI, Exercise
	D]{ACGH:I}}.
	Call such a cover {\em Prill exceptional}.
	Any Prill exceptional cover has
	the property that for each $x \in X$, the curve $Y \setminus  f^{-1}(x)$ has a nonconstant map to $\Aff^1$.
	It is known that for every genus $2$ curve $X$ over $\mathbb C$,
 there is a finite \'etale Prill exceptional cover of $X$ \cite{landesmanL:prill}, but it remains wide open whether there exists any $X$ of genus at least $3$ with a Prill exceptional cover.

	Moreover, by \cite[Lemma~5.5]{landesmanL:applications-putman-wieland}, any Prill exceptional
	cover $f: Y \to X$ of a general curve $X$ of genus $g$ gives a
	counterexample to another conjecture in geometric topology, the
	Putman--Wieland conjecture,
	as stated in \cite[Conjecture 1.2]{putmanW:abelian-quotients}.
	In fact, \cite[Lemma~6.10]{landesmanL:introduction-putman-wieland} implies that the Putman--Wieland conjecture is equivalent to a statement
	about maps from covers of curves to abelian varieties (in place of 
	$\mathbb G_m$ as in \autoref{theorem:no-finite-etale-cover-over-C}): the
	statement is that for each $g \ge 2$, $n \ge 0$, and abelian variety $A$ over $\mathbb C$, a general $n$-pointed genus $g$ curve $X$ has no finite cover $Y \to X$ branched only over the $n$ points
	with a nonconstant map $Y \to A$.
	This statement is false for $g=2$ \cite[Theorem 1.3]{markovic} but open for $g \geq 3$.
	The Putman--Wieland conjecture, in turn, is closely related to
	another longstanding conjecture in geometric topology, Ivanov's
	conjecture \cite[Theorem~1.3]{putmanW:abelian-quotients}.
\end{remark}

\section{Proof of \autoref{theorem:special-set-is-countable}}
\label{S:proof of theorem}

\begin{ideaofproof}
Fix $K$ and $X$ as in \autoref{theorem:special-set-is-countable}.
First, we reduce to proving countability of the set $X(K)_{\specialK,G}$ of $x \in X(K)$ such that $X \setminus \{x\}$ has a $G$-\emph{Galois} finite \'etale cover with a nonconstant morphism to $\G_m$, for each finite group $G$.
Next, we construct a moduli space $M$ parametrizing $G$-covers $f \colon Y \to X$ branched at a varying point $x \in X$, together with a universal curve $S \to M$. We show that if there is a relation between the points in $f^{-1}(x)$ for a very general $x$, then there is a relation between certain divisors in $S$. Finally, we rule out such a relation by showing that the intersection matrix of these divisors is invertible.
\end{ideaofproof}


\begin{remark}
	\label{remark:suffices-countable}
	Since every finite \'etale cover is dominated by a Galois finite \'etale cover, $\Union_G X(K)_{\specialK,G} = X(K)_{\specialK}$.
From now on, we fix $G$.
It remains to prove that $X(K)_{\specialK,G}$ is countable.
\end{remark}

We next construct a moduli space of $G$-covers of $X$ branched over at most one point.

\begin{lemma}
	\label{lemma:moduli}
There is a finite-type $K$-scheme $\MM$ parametrizing
$(x,Y,f,\calB,y_1,\ldots,y_n)$, where $x$ is a point of $X$, $Y$ is a nice curve, $f \colon Y \to X$ is a Galois cover with Galois group $G$ that is \'etale above $X\setminus \{x\}$ (at least), and $\calB$ is a basis for $J[3]$, where $J$ is the Jacobian of $Y$,
and $y_1,\ldots,y_n$ are the distinct points of $f^{-1}(x)$
(here $n$ is constant on each irreducible component of $\MM$).
Each irreducible component of $\MM$ is a nice curve.
\end{lemma}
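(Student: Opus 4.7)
The plan is to realize $\MM$ as a rigidified Hurwitz space parametrizing $G$-Galois covers of the fixed curve $X$ with at most one (moving) branch point. The key observation is that the forgetful morphism to $X$ (remembering the branch point) will be finite étale; then, because $X$ is nice and $K$ is algebraically closed, $\MM$ is automatically smooth, projective, and of finite type over $K$, every connected (equivalently irreducible) component is geometrically integral and hence a nice curve, and constancy of the inertia subgroup conjugacy class on a connected étale family forces $n$ to be constant on each component.

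First I would define the moduli functor on $K$-schemes $T$ by sending $T$ to tuples $(x,f,\calB,y_1,\ldots,y_n)$ where $x \in X(T)$, $f\colon Y \to X \times T$ is a family of $G$-Galois covers étale outside the graph of $x$, $\calB$ is an ordered basis of $J[3]$ for the relative Jacobian $J$, and $y_1,\ldots,y_n$ are ordered sections of $Y \to T$ whose images together form the reduced preimage of the graph of $x$. On geometric fibres over $x \in X(K)$, Riemann existence identifies $G$-Galois covers of $X$ étale outside $\{x\}$ with equivalence classes of surjections $\pi_1(X \setminus \{x\}) \twoheadrightarrow G$, a finite set since $\pi_1(X\setminus \{x\})$ is finitely generated and $G$ is finite. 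Classical Hurwitz theory (or the topological monodromy/parallel-transport argument) extends this to a finite étale family over $X$ as $x$ varies.

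To represent this functor by a scheme (rather than a Deligne--Mumford stack), I would rigidify using the level-$3$ data. Since $g(X) \ge 2$, Riemann--Hurwitz gives $g(Y) \ge 2$; Torelli implies the finite group $\Aut_G(Y/X)$ acts faithfully on $J$, and Serre's lemma (the kernel of $\Aut(J) \to \Aut(J[\ell])$ is torsion-free for $\ell \ge 3$) upgrades this to a faithful action on $J[3]$. Thus specifying an ordered basis $\calB$ of $J[3]$ kills all automorphisms of $(x,f)$, and adding an ordering of $f^{-1}(x)$ is a further finite étale $S_n$-torsor. The resulting scheme $\MM$ is finite étale over $X$, and the conclusions of the lemma follow.

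The main obstacle I expect is the representability step: precisely, lifting the topological picture of covers $\leftrightarrow$ $\pi_1$-representations to an algebraic finite étale family over $X$, and checking that level-$3$ rigidification suffices across all inertia conjugacy classes simultaneously. This is a well-known but slightly delicate application of the theory of Hurwitz schemes, relying on Serre's lemma on automorphisms of abelian varieties acting on $\ell$-torsion.
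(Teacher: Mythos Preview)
Your proposal is correct and follows essentially the same approach as the paper: build the Hurwitz space of $G$-covers of $X$ with one moving branch point, rigidify by a full level-$3$ structure on the Jacobian (plus an ordering of the fibre) to kill automorphisms, and conclude that the resulting $\MM$ is finite \'etale over the nice curve $X$. The only cosmetic differences are that the paper cites Wewers's thesis for the algebraic construction and properness/\'etaleness of the Hurwitz stack $\calH \to X$ (whereas you sketch this via Riemann existence and monodromy), and the paper packages your Torelli\,+\,Serre's lemma step into a single citation to Katz--Sarnak for the injectivity of $\Aut Y \to \Aut J[3]$.
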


\begin{proof}
	This essentially follows from \cite[Theorem~4]{wewers:thesis}.	
Let the $D \subset X \to S$ of Wewers
be the diagonal $\Delta \subset X \times X \to X$.
	By \cite[Theorem~4]{wewers:thesis}, there is a finite-type algebraic stack
	$\calH = \calH_{X \times X}^G(G)$ over $X$ such that for any $S' \to X$,
	the groupoid $\calH(S')$ parameterizes $G$-Galois finite locally free covers $Y'
	\to S' \times_{X} (X \times X)$ that are tamely ramified above $S' \times_X
	\Delta$ and \'etale elsewhere.
	By \cite[Theorem~4]{wewers:thesis}, $\calH \to X$ is
	\'etale.  It follows from \cite[Theorem~3.2.4]{wewers:thesis} that $\calH \to X$ is
	proper.

	By adding the data of $\calB$ and the sections $y_1, \ldots, y_n$ 
	to our moduli stack, we obtain a finite
	\'etale cover $\MM$ of $\calH$.
	Each groupoid $\MM(S')$ is a setoid because $\Aut Y \to \Aut J[3]$ is injective \cite[10.5.6]{Katz-Sarnak1999}, and hence $\MM$ is represented by an algebraic space.
	Since $\MM \to \calH$ is
	finite \'etale and $\calH \to X$ is proper \'etale, $\MM$ is
	proper \'etale over $X$. By the previous two sentences, $\MM$ is finite \'etale over $X$ and is
	therefore a scheme. 
	Since $\MM$ is finite \'etale over the nice curve $X$, each 
	irreducible component of $\MM$ is a nice curve.
\end{proof}

\begin{notation}
\label{notation:relative-curve}
Let $M$ be an irreducible component of $\MM$.
Then $M$ is a nice curve.
Let $\eta$ be the generic point of $M$.
Let $\pi \colon S \to M$ be the universal morphism whose fiber above
$m = (x,Y,f,\calB,y_1,\ldots,y_n) \in M$ is $Y$.
Thus $S$ is a nice surface, and $\pi \colon S \to M$ is a relative curve.
Let $h \colon S \to X$ be the morphism whose restriction to each fiber $Y$ of $\pi$ is the map $f \colon Y \to X$.
Let $\mu\colon M \to X$ be $(x,Y,f,\calB,y_1,\ldots,y_n) \mapsto x$.
Let $e$ be the positive integer such that for any $m \in M$, the ramification index of the corresponding map $f \colon Y \to X$ at any point above $x$ is $e$.
Let $s_1,\ldots,s_n \colon M \to S$ be the sections such that $s_i(m)=y_i$ for each $i$.
Let $D_i=s_i(M) \in \Div(S)$.
Let $F \in \Div(S)$ be a closed fiber of $S \to M$.
Let $\NS(S)$ be the N\'eron--Severi group of $S$.
\end{notation}

\begin{proposition}
	\label{proposition:independence-of-sections}
 The classes of $D_1,\ldots,D_n,F$ in $\NS(S)$ are $\Z$-independent.
\end{proposition}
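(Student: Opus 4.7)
The plan is to compute the $(n+1) \times (n+1)$ symmetric intersection matrix of the classes $[D_1], \ldots, [D_n], [F]$ on $S$ and conclude independence by showing that its determinant is nonzero. The easy entries come first: $F^2 = 0$ because any fiber is algebraically equivalent to a disjoint fiber; $D_i \cdot F = 1$ because each $D_i = s_i(M)$ is a section of $\pi$; and $D_i \cdot D_j = 0$ for $i \neq j$, since $s_i(m) = y_i(m) \neq y_j(m) = s_j(m)$ for every $m \in M$, so $D_i \cap D_j = \emptyset$.

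The heart of the argument is the computation of the self-intersection $D_j^2$. I would consider the finite morphism $\phi \coloneqq (h, \pi) \colon S \to X \times M$ of generic degree $|G|$, together with the graph $\Gamma_\mu \subset X \times M$ of $\mu$. A local coordinate computation near a point $(m, y_i(m))$, using that $f_m \colon Y_m \to X$ is ramified of index $e$ at $y_i(m)$, shows that
\[
\phi^* \Gamma_\mu \;=\; e \sum_{i=1}^n D_i
\]
as Cartier divisors on $S$. The restriction $\phi|_{D_j} \colon D_j \to \Gamma_\mu$ is an isomorphism under the natural identifications $D_j \cong M \cong \Gamma_\mu$, so $\phi_*[D_j] = [\Gamma_\mu]$. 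Combining the projection formula with $D_i \cdot D_j = 0$ for $i \neq j$ then yields
\[
e \cdot D_j^2 \;=\; \phi^* \Gamma_\mu \cdot D_j \;=\; \Gamma_\mu \cdot \phi_* D_j \;=\; \Gamma_\mu^2.
\]

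To evaluate $\Gamma_\mu^2$, note that since $\Gamma_\mu$ is a section of the second projection $X \times M \to M$, its normal bundle in $X \times M$ is identified, via $\Gamma_\mu \isomto M$, with $\mu^* T_X$. Hence $\Gamma_\mu^2 = \deg_M(\mu^* T_X) = \deg(\mu)(2 - 2g)$. The morphism $\mu$ is finite \'etale by \autoref{lemma:moduli} and surjective since $X$ is connected, so $\deg(\mu) > 0$; combined with $g \geq 2$, this gives $D_j^2 = \deg(\mu)(2-2g)/e < 0$, a common negative value independent of $j$.

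Given these intersection numbers, independence is elementary linear algebra. Suppose $\sum_i c_i [D_i] + c_{n+1} [F] = 0$ in $\NS(S)$. Intersecting with $F$ yields $\sum_i c_i = 0$; intersecting with each $D_j$ yields $c_j D_j^2 + c_{n+1} = 0$, so the rational number $c_j = -c_{n+1}/D_j^2$ takes the same value for every $j$. The vanishing of $\sum_i c_i$ then forces this common value to be $0$, whence $c_{n+1} = 0$ as well. The main obstacle will be the local coordinate verification of $\phi^* \Gamma_\mu = e \sum_i D_i$; once that multiplicity $e$ is correctly accounted for, the projection formula and the adjunction computation proceed cleanly.
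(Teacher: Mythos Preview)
Your proof is correct and follows essentially the same strategy as the paper: compute the intersection matrix, with the only nontrivial entry $D_j^2$ obtained by pulling back a suitable divisor and using its known self-intersection. The paper phrases the key computation via the diagonal $\Delta \subset X \times X$ pulled back through $S \xrightarrow{(\pi,h)} M \times X \xrightarrow{\mu \times \id} X \times X$, whereas you use the graph $\Gamma_\mu \subset X \times M$ and the projection formula; these are equivalent (your $\Gamma_\mu$ is the preimage of $\Delta$ under $\id_X \times \mu$, and ``pull back along $s_i$'' is your $\phi_*[D_j]=[\Gamma_\mu]$), and both yield $D_j^2 = (\deg\mu)(2-2g)/e < 0$, after which your direct linear-algebra argument replaces the paper's determinant computation.
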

\begin{proof}
	It suffices to prove that the intersection matrix is nonsingular.
We know $D_i \cdot D_j = 0$ for $i \neq j$, $D_i \cdot F = 1$, $F \cdot
F = 0$, and $D_i^2 = d$ for some $d$ independent of $i$, because $G$ acts
transitively on $D_1,\ldots, D_n$.
Then the intersection matrix is
	\begin{align*}
		\begin{pmatrix}
			d & 0 & 0 & \cdots & 0 & 1 \\
			0 & d& 0 & \cdots & 0 &1 \\
			0 & 0& d & \cdots & 0 & 1 \\
			\vdots & \vdots & \vdots & \ddots & \vdots & \vdots \\
			0 & 0 & 0 & \cdots & d & 1 \\
			1 & 1&  1 & \cdots & 1 & 0
		\end{pmatrix},
	\end{align*}
	which has determinant $- n d^{n-1}$.
	In \autoref{lemma:self-intersection}, we verify that $d \neq 0$.
\end{proof}

    \begin{lemma}
		\label{lemma:self-intersection}
		We have $D_i^2  = (\deg \mu)(2-2g)/e < 0$.
	\end{lemma}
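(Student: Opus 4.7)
The plan is to identify $D_i^2$ with the degree of the normal bundle of $D_i = s_i(M)$ in $S$, and then compute that normal bundle by a fiberwise Riemann--Hurwitz comparison along $h\colon S\to X$. Since $\pi\colon S\to M$ is smooth of relative dimension one (its fibers are the nice curves $Y$), for any section $s_i$ there is a natural identification $N_{D_i/S}\cong s_i^*T_{S/M}$, so
\[
 D_i^2 \;=\; \deg_M\!\bigl(s_i^*T_{S/M}\bigr).
\]

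To compute the right-hand side, I would compare $T_{S/M}$ with $h^*T_X$ via the differential of $h$ along the fibers of $\pi$, which is a map of line bundles $dh\colon T_{S/M}\to h^*T_X$. Restricted to a fiber $Y$ of $\pi$, this is the differential of the cover $f\colon Y\to X$, which by tame ramification with index $e$ at each $y_j$ vanishes to order exactly $e-1$ at $y_j$ and is nonzero elsewhere. Viewing $dh$ as a global section $\tau$ of $h^*T_X\otimes T_{S/M}^{-1}$, its zero divisor restricts to $(e-1)\sum_j y_j$ on every fiber of $\pi$. A vertical component $\pi^*D'$ of $\divv\tau$ would restrict to a nontrivial multiple of a whole fiber above any point of $\supp D'$, contradicting the previous sentence; so $\divv\tau=(e-1)\sum_{j=1}^{n}D_j$ exactly, and
\[
 T_{S/M}\;\cong\;h^*T_X\otimes\calO_S\!\Bigl(-(e-1)\sum_{j=1}^{n}D_j\Bigr).
\]

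Pulling back along $s_i$ and taking degrees over $M$ then yields
\[
 D_i^2 \;=\; (\deg\mu)(2-2g)\;-\;(e-1)\sum_{j=1}^{n}D_i\cdot D_j \;=\; (\deg\mu)(2-2g)-(e-1)D_i^2,
\]
using $h\circ s_i = \mu$, $\deg T_X = 2-2g$, and the disjointness of $s_i(M)$ and $s_j(M)$ for $j\ne i$ (since at each $m$ the points $y_i,y_j$ are distinct). Solving gives $eD_i^2=(\deg\mu)(2-2g)$, whence $D_i^2=(\deg\mu)(2-2g)/e$; the strict inequality $D_i^2<0$ follows immediately from $g\ge 2$.

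The step that will take the most care is upgrading the fiberwise Riemann--Hurwitz identity to a clean global isomorphism on $S$, since a priori it only pins down the two sides up to a pullback twist $\pi^*L$ from $M$. Ruling out a vertical component of $\divv\tau$ by the fiber-by-fiber argument above is exactly what closes this gap; once the clean global identity is in hand, the final computation is forced on one by combining the disjointness of distinct sections with the standard projection-formula equality $\deg_M s_i^*\calO_S(D) = D\cdot D_i$.
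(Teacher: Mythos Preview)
Your proof is correct and reaches the same identity $eD_i^2=(\deg\mu)(2-2g)$ as the paper, but by a genuinely different route. The paper pulls back the diagonal $\Delta\subset X\times X$ along $S\xrightarrow{(\pi,h)}M\times X\xrightarrow{\mu\times\id}X\times X$; on each fiber this pullback is the divisor $f^*x=e\sum_j y_j$, hence globally it is $e\sum_j D_j$, and comparing with the pullback along $M\xrightarrow{\mu}X\xrightarrow{\delta}X\times X$ (using $\Delta\cdot\Delta=2-2g$) yields the formula. You instead compute the normal bundle of $D_i$ via a relative Riemann--Hurwitz argument: the fiberwise differential gives a section of $h^*T_X\otimes T_{S/M}^{-1}$ with divisor $(e-1)\sum_j D_j$, so $T_{S/M}\cong h^*T_X\bigl(-(e-1)\sum_j D_j\bigr)$, and pulling back along $s_i$ finishes. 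The two arguments are in a sense dual: the paper uses the \emph{pullback} divisor $f^*x=e\sum_j y_j$, while you use the \emph{ramification} divisor $(e-1)\sum_j y_j$; both rely on $h\circ s_i=\mu$ and on the same ``no vertical component'' check (which you carry out correctly, and which the paper leaves implicit). Your version makes the factor of $e$ in the denominator emerge more transparently from the algebra, while the paper's diagonal trick is slightly slicker in that it never needs to name $T_{S/M}$.
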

 
	\begin{proof}
	Observe that $\mu = h \circ s_i$ because
	$\mu(x,Y,f,\calB,y_1,\ldots,y_n) = x = h(y_i)$ and $y_i = s_i(x,Y,f,\calB,y_1,\ldots,y_n)$.
Let $\delta \colon X \to X \times X$ be the diagonal.
	The diagram
	\begin{equation}
		\label{equation:big}
		\begin{tikzcd} 
			S \ar{r}{(\pi, h)} & M \times X \ar {r}{\mu \times \id}
			& X \times X \\
			\qquad &M \ar{ul}{s_i} \ar[swap]{u}{(\id, \mu)} \ar{r}{\mu} \ar {r} &
			X \ar[swap]{u}{\delta}
	\end{tikzcd}\end{equation}
	 commutes, since $\pi \circ s_i = \id$ and $h \circ s_i = \mu$.

Let $\Delta=\delta(X) \subset X \times X$ be the diagonal.  
We will pull back the class of $\Delta$ to a line bundle on $M$ along the two outer paths in~\eqref{equation:big}.
First, we determine the pullback of $\Delta$ to $S$ by calculating its restriction to each fiber of $\pi \colon S \to M$.
If $m = (x,Y,f,\calB,y_1,\ldots,y_n) \in M$, 
so $Y$ is the fiber of $S \to M$ over $m$, 
then the composition 
\[
   Y \injects S \xrightarrow{(\pi,h)} M \times X \xrightarrow{\mu \times \id} X \times X
\]
is $Y \xrightarrow{(x,f)} X \times X$ (constant first coordinate),
so $(\pi,h)^* (\mu \times \id)^* \Delta |_Y = f^{-1}(x) = e \sum_{j=1}^n y_j$ in $\Div Y$.
Thus $(\pi,h)^* (\mu \times \id)^* \Delta = e \sum_{j=1}^n D_j$ in $\Div S$, and pulling this back to $M$ along $s_i$ yields $D_i \cdot (e \sum_{j=1}^n D_j) \in \Div M$,
which has degree $e D_i^2$.
On the other hand, the class of $\Delta \subset X \times X$ pulls back to a divisor of degree $\Delta \cdot \Delta = 2-2g$ on $X$, which pulls back to a divisor of degree $(\deg \mu)(2-2g)$ on $M$.
Thus
\[
  e D_i^2 = (\deg \mu)(2-2g),\qedhere
\]
which is negative, since $g \ge 2$.
\end{proof}

 \begin{corollary}
 \label{corollary:generic-independence}
 The images of $D_1,\ldots,D_n$ in $\Pic(S_\eta)$ are $\Z$-independent.
 \end{corollary}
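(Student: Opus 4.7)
The plan is to deduce the corollary directly from \autoref{proposition:independence-of-sections} by lifting a hypothetical relation in $\Pic(S_\eta)$ back to a relation in $\NS(S)$. I would proceed by contradiction: assume $\sum_{i} a_i [D_i|_{S_\eta}] = 0$ in $\Pic(S_\eta)$ for integers $a_1,\ldots,a_n$ not all zero. The first step is to argue that $\sum_i a_i D_i$ is then linearly equivalent in $\Pic(S)$ to a divisor supported on closed fibers of $\pi\colon S \to M$. This comes from the standard exact sequence
\[
\bigoplus_{c \in M \text{ closed}} \bigoplus_{V \subseteq \pi^{-1}(c)} \Z\,[V] \To \Pic(S) \To \Pic(S_\eta) \To 0,
\]
whose exactness at $\Pic(S)$ follows by extending a rational function on $S_\eta$ witnessing the linear equivalence to a rational function on $S$ (the two schemes share a function field) and observing that the complement of $S_\eta$ in $S$ is the union of the closed fibers of $\pi$.

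The next step is to show that any divisor supported on closed fibers has class a $\Z$-multiple of $[F]$ in $\NS(S)$. By the moduli-theoretic description in \autoref{lemma:moduli}, each closed fiber of $\pi$ is one of the nice curves $Y$, hence irreducible, so the inner direct sum in the sequence collapses and a fiber-supported divisor has the form $E = \sum_c n_c\,[\pi^{-1}(c)]$. Since $M$ is a nice (in particular connected) curve, any two closed points of $M$ are algebraically equivalent as divisors on $M$; pulling back along $\pi$ shows that all closed fibers of $\pi$ have the same class $[F]$ in $\NS(S)$. Hence $[E] = \bigl(\sum_c n_c\bigr)[F]$ in $\NS(S)$.

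Combining the two steps yields $\sum_i a_i [D_i] = c[F]$ in $\NS(S)$ for some integer $c$, which contradicts the $\Z$-independence of $[D_1],\ldots,[D_n],[F]$ provided by \autoref{proposition:independence-of-sections}. The only point that genuinely requires attention is the exactness of the Picard sequence above, and this is standard; the rest is bookkeeping enabled by the irreducibility of the fibers of $\pi$ and the connectedness of $M$. I anticipate no serious obstacle.
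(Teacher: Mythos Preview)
Your proof is correct and follows essentially the same approach as the paper: both argue by contradiction, invoke the exact sequence $\bigoplus_{t} \Z\,[\pi^{-1}(t)] \to \Pic(S) \to \Pic(S_\eta) \to 0$, observe that every fiber has the same class $[F]$ in $\NS(S)$, and then appeal to \autoref{proposition:independence-of-sections}. You simply spell out more of the details (irreducibility of fibers, why the fibers are algebraically equivalent) that the paper leaves implicit.
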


 \begin{proof}
 If there were a relation, the exact sequence 
 \begin{equation}
		\label{equation:}
		\begin{tikzcd}
			 \displaystyle \Directsum_{t \in M(K)} \mathbb Z \cdot \pi^{-1}(t)
			\ar {r} & \Pic(S) \ar {r} & \Pic(S_\eta) \ar {r}
			& 0
	\end{tikzcd}\end{equation}
	would give a relation between $D_1,\ldots,D_n,F$ in $\NS(S)$,
 since the class of each fiber $\pi^{-1}(t)$ in $\NS(S)$ is $F$.
 \end{proof}

\begin{proof}[End of proof of \autoref{theorem:special-set-is-countable}]
As mentioned in \autoref{remark:suffices-countable}, it suffices to show
$X(K)_{\specialK,G}$ is countable.
It is the image under $\MM \to X$ of
\begin{equation}
\label{E:M(K)'}
   \MM(K)' \colonequals \{ (x,Y, \ldots) \in \MM(K) : \textup{$Y$ has a nonconstant morphism to $\G_m$} \},
\end{equation}
which is the finite union of sets $M(K)' \colonequals \MM(K)' \intersect M$.
Each $M(K)'$ is the union over nonzero $a = (a_1, \ldots, a_n) \in
\mathbb Z^n$ of
\begin{align*}
	V_a := \{ (x,Y,f,\calB,y_1,\ldots,y_n) \in M(K) : \sum_{i=1}^n a_i y_i =0 \textup{ in $\Pic(Y)$}\},
\end{align*}
so it suffices to prove that $V_a$ is finite.
The set $V_a$ is the zero locus of a section of the relative Picard scheme $\PIC_{S/M} \to M$, so $V_{a} \subset M(K)$ is closed.
If $V_a = M(K)$, then $\sum_{i=1}^n a_i D_i = 0$ in $\Pic(S_\eta)$, contradicting \autoref{corollary:generic-independence}.
Hence $V_a$ is finite.
\end{proof}

\section{Bounded height}
\label{section:bounded-height}

In this section, we set $K=\Qbar$ and prove in \autoref{theorem:bounded-height} that each set $X(\Qbar)_{\specialK,G}$ is of \emph{bounded height}.

First, we introduce notation for \autoref{theorem:canonical-heights}, which records known results about heights and specialization.
Let $\pi \colon S \to M$ be a morphism from a nice surface to a nice curve over $\Qbar$.
For each field extension $L \supset \Qbar$ and $t \in M(L)$, let $S_t = \pi^{-1}(t)$; if $S_t$ is a \emph{nice} curve, let $J_t$ be its Jacobian, a principally polarized abelian variety over $L$.
By a \defi{fibral component} we mean an irreducible component of $S_t$ for some $t \in M(\Qbar)$.
Let $\eta$ be the generic point of $M$.  We assume that $S_\eta$ is a \emph{nice} curve (over the function field $k(M)$).

Let $k$ be a field with a product formula as in \cite[\S
2.1]{serre:lectures-on-the-mordell-weil-theorem}. 
Given a polarized abelian variety $A$ over $k$, 
one can enlarge $k$ so that the polarization arises from a symmetric divisor on $A$
and then define a canonical height pairing on $A(\kbar)$, or its subgroup $A(k)$, as in \cite[pp.~200--201]{Silverman1983}. 
Applying this to $J_\eta$ over $k(M)$ yields a (geometric) canonical height pairing $\pairing$ on $J_\eta(k(M)) = \Pic^0(S_\eta)$ or on $\Div^0(S_\eta)$.
For any $t \in M(\Qbar)$ such that $S_t$ is a nice curve, applying this to $J_t$ (over a number field to which it descends) yields a canonical height pairing $\pairing_t$ on $J_t(\Qbar) = \Pic^0(S_t)$ or on $\Div^0(S_t)$.

For each closed point $P \in S_\eta$, let $\overline{P}$ be its Zariski closure in $S$. 
Extend $\Z$-linearly to define $\overline{P} \in \Div S$ for any $P \in \Div(S_\eta)$;
then for any $t \in M(\Qbar)$, define the specialization $P_t \colonequals \overline{P}|_{S_t} \in \Div(S_t)$.
Let $h \colon M(\Qbar) \to \R$ be a Weil height associated to a divisor of nonzero degree on $M$, as in \cite[p.~205]{Silverman1983}.

\begin{theorem}
\label{theorem:canonical-heights}
Let $S$, $M$, $\pi$, $S_\eta$, $S_t$, $\pairing$, $\pairing_t$, and $h$ be as above.
\begin{enumerate}[\upshape (a)]
\item \label{I:fibral adjustment}
   Let $P \in \Div^0(S_\eta)$.
Then there exists a $\Q$-linear combination $\Phi$ of fibral components such that $D_P \colonequals \overline{P} + \Phi \in \Div(S) \tensor \Q$ is orthogonal to each fibral component.
Moreover, $\Phi$ is unique modulo $\pi^* \Div(M) \tensor \Q$.
\item \label{I:canonical height and intersection pairing}
   For $P,P' \in \Div^0(S_\eta)$, we have $\langle P,P' \rangle = - D_P \cdot D_{P'}$.
\item \label{I:height ratio}
   Fix $P,Q \in \Div^0(S_\eta)$. Then $\langle P_t,Q_t \rangle_t / h(t) \to \langle P,Q \rangle$ as $h(t) \to \infty$.
\item \label{I:independence of specializations}
   Let $P_1,\ldots,P_n \in \Div^0(S_\eta)$.
If the matrix $(\langle P_i,P_j \rangle)$ is positive definite, 
then the set of $t \in M(\Qbar)$ such that $P_{1,t},\ldots,P_{n,t}$ are $\Z$-dependent in $\Pic^0(S_t)$ is of bounded height.
\end{enumerate}
\end{theorem}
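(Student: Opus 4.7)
The plan is to assemble (a)--(d) from standard results on canonical heights for families of abelian varieties and intersection theory on fibered surfaces. The backbone is Silverman's specialization theorem together with the Faltings--Hriljac identification of the N\'eron--Tate pairing as an intersection number; parts (a) and (b) are purely geometric inputs, (c) is a specialization statement, and (d) is a short linear-algebra consequence of (c).

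For (a), I would reduce to finitely many fibers: fibral components arise only over the finitely many $t \in M(\Qbar)$ where $\pi$ has a reducible fiber, and components over distinct such $t$ are mutually orthogonal, so the orthogonality condition $(\overline{P} + \Phi) \cdot C = 0$ decouples by fiber. On each reducible fiber, Zariski's lemma says the intersection form on components is negative semidefinite with one-dimensional kernel spanned by the full fiber class. The degree-zero hypothesis $P \in \Div^0(S_\eta)$ guarantees that the right-hand side of the linear system for $\Phi|_{S_t}$ lies in the image of the form, so $\Phi$ exists over $\Q$; uniqueness modulo $\pi^* \Div(M) \tensor \Q$ is exactly the ambiguity of a multiple of each fiber. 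For (b), I would invoke the function-field version of the Faltings--Hriljac formula expressing $\pairing$ on $J_\eta = \Pic^0(S_\eta)$ as minus the intersection pairing of the admissible lifts $D_P, D_{P'}$ produced in (a); the characteristic-zero function-field case is cleaner than the arithmetic one.

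For (c), the diagonal case $\langle P_t, P_t \rangle_t / h(t) \to \langle P, P \rangle$ is Silverman's specialization theorem \cite{Silverman1983}, and the bilinear case follows by polarizing: the specialization map $P \mapsto P_t$ is linear, so applying the diagonal limit to $P+Q$, $P$, and $Q$ separately and combining via $\langle P, Q \rangle = \tfrac{1}{2}(\langle P+Q, P+Q \rangle - \langle P, P \rangle - \langle Q, Q \rangle)$ yields the off-diagonal statement. For (d), set $\mathbf{M} = (\langle P_i, P_j \rangle)_{i,j}$ and $\mathbf{M}_t = (\langle P_{i,t}, P_{j,t} \rangle_t)_{i,j}$. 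By (c) applied entrywise, $\mathbf{M}_t / h(t) \to \mathbf{M}$ as $h(t) \to \infty$, so $\det(\mathbf{M}_t)/h(t)^n \to \det \mathbf{M} > 0$, and $\mathbf{M}_t$ is nonsingular once $h(t)$ is sufficiently large. Any $\Z$-dependence $\sum a_i P_{i,t} = 0$ in $\Pic^0(S_t)$ forces the column vector $\mathbf{a} \in \Z^n$ to satisfy $\mathbf{M}_t \mathbf{a} = 0$, contradicting nonsingularity.

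The main obstacle I anticipate is (b): either citing a sufficiently clean reference for the function-field Faltings--Hriljac formula, or developing it in place (constructing admissible adelic metrics on the line bundles attached to degree-zero divisors and matching local canonical heights with the local intersection contributions). The work in (a) is routine once the Zariski-decomposition viewpoint is in hand, and (c) and (d) are short given Silverman's theorem plus linear algebra; the sign in (b) and the normalization of the height $h$ in (c) and (d) will need to be tracked carefully so that the positive-definiteness input in (d) matches the negative sign in (b).
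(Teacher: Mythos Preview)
Your proposal is correct and follows essentially the same route as the paper: (a) via the nondegeneracy of the intersection form on fibral components modulo the full fiber (the paper cites Gross rather than naming Zariski's lemma), (b) via the Faltings--Hriljac/Gross identification of the canonical height with the intersection pairing, (c) via Silverman's specialization theorem plus polarization, and (d) via the entrywise convergence of the Gram matrix. The only cosmetic difference is that the paper concludes (d) by noting that positive definiteness is an open condition, whereas you pass through the determinant; both arguments are equivalent.
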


\begin{proof}\hfill
\begin{enumerate}[\upshape (a)]
\item Let $t \in M(\Qbar)$.
Let $F_1,\ldots,F_n$ be the irreducible components of $S_t$.
Let $F$ be the class of $S_t$ in $\Div(S)$.
The existence and uniqueness of the part of $\Phi$ supported above $t$ follows from the nondegeneracy of the intersection pairing on $(\Directsum \Q F_i) / \Q F$, as in \cite[end of \S3]{Gross1986} (the finiteness of the residue field assumed there is not needed for this).
\item This too follows from the same argument as in \cite[end of \S3]{Gross1986}.  (The relation between canonical heights and intersection numbers was discovered earlier in the context of an abelian variety and its dual, by Manin~\cite{Manin1964} and N\'eron~\cite{Neron1965}.)
\item This is a version of Silverman's specialization theorem, which extended ideas of Dem$'$janenko \cite{Demjanenko1968} and Manin \cite{Manin1969}.  Specifically, the case $P=Q$ is contained in \cite[Theorem~B]{Silverman1983}.  The general case follows from this case applied to $P$, $Q$, and $P+Q$.
\item
If $h(t)$ is sufficiently large, then \eqref{I:height ratio} implies that $(\langle P_{i,t},P_{j,t} \rangle)$ is positive definite too, so $P_{1,t},\ldots,P_{n,t}$ are $\Z$-dependent in $\Pic^0(S_t)$.\qedhere
\end{enumerate}
\end{proof}

\begin{theorem}
\label{theorem:bounded-height}
Let $X$ be a nice curve of genus at least $2$ over $\Qbar$.
Then for each finite group $G$, the set $X(\Qbar)_{\specialK,G}$ is of bounded height.
\end{theorem}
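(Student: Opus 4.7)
The plan is to deduce \autoref{theorem:bounded-height} from \autoref{theorem:canonical-heights}\eqref{I:independence of specializations} applied to suitable degree-zero divisors on the generic fiber of each relative curve from \autoref{notation:relative-curve}. As in the proof of \autoref{theorem:special-set-is-countable}, $X(\Qbar)_{\specialK,G}$ is the image under $\mu \colon \MM \to X$ of a finite union of sets $M(\Qbar)'$, one for each irreducible component $M$ of $\MM$; since $\mu$ is a morphism of nice curves, it suffices to bound the height on $M$ of $M(\Qbar)'$ for each such~$M$.

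Fix such an $M$, and set $P_i \colonequals y_i - y_n \in \Div^0(S_\eta)$ for $i = 1, \ldots, n-1$. Any nontrivial integer relation $\sum_{i=1}^n a_i (y_i)_t = 0$ in $\Pic(Y_t)$ forces $\sum a_i = 0$ by comparing degrees, and so rewrites as $\sum_{i=1}^{n-1} a_i P_{i,t} = 0$ in $\Pic^0(S_t)$ with coefficients not all zero. Thus $M(\Qbar)'$ is contained in the set of $t \in M(\Qbar)$ for which $P_{1,t}, \ldots, P_{n-1,t}$ are $\Z$-dependent in $\Pic^0(S_t)$, and it will suffice, by \autoref{theorem:canonical-heights}\eqref{I:independence of specializations}, to show that the matrix $(\langle P_i, P_j \rangle)$ is positive definite.

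To compute this matrix, I would use that the fibers of $\pi \colon S \to M$ are nice curves, hence irreducible, so that the fibral components of $S$ are exactly the entire fibers $F$. Since $\overline{P_i} = D_i - D_n$ already satisfies $\overline{P_i} \cdot F = 1 - 1 = 0$, the adjustment of \autoref{theorem:canonical-heights}\eqref{I:fibral adjustment} can be taken to be zero, so $D_{P_i} = D_i - D_n$. Then \autoref{theorem:canonical-heights}\eqref{I:canonical height and intersection pairing}, combined with the intersection numbers from the proof of \autoref{proposition:independence-of-sections} and \autoref{lemma:self-intersection}, yields
\[
  \langle P_i, P_j \rangle = -(D_i - D_n) \cdot (D_j - D_n) = -d\,(1 + \delta_{ij}),
\]
where $d = (\deg \mu)(2-2g)/e < 0$. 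Thus the pairing matrix equals $-d(I + J)$, where $J$ is the all-ones matrix; its eigenvalues $-dn$ and $-d$ are strictly positive, so the matrix is positive definite.

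The anticipated obstacle is arranging a clean application of the canonical-height machinery; this is what motivates passing to the degree-zero divisors $y_i - y_n$ (to land in $\Pic^0$) and exploiting the irreducibility of the fibers of $\pi$ (which ensures no nontrivial fibral correction is required). Once the positive-definiteness computation above is in hand, the rest is routine: apply \autoref{theorem:canonical-heights}\eqref{I:independence of specializations} to obtain the height bound on $M$, push forward along the finite morphism $\mu$, and take the union over the finitely many irreducible components of $\MM$.
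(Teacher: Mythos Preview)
Your proof is correct and follows essentially the same route as the paper: pass to degree-zero divisors $D_i - D_{\text{base}}$ on $S_\eta$, use irreducibility of the fibers to take $\Phi=0$, compute the height pairing via intersection numbers from \autoref{lemma:self-intersection}, verify positive definiteness, and apply \autoref{theorem:canonical-heights}\eqref{I:independence of specializations}. The only differences are cosmetic --- you anchor at $y_n$ rather than $y_1$ and compute the eigenvalues of $-d(I+J)$ explicitly, whereas the paper simply notes that restricting the negative-definite form $dI$ to the subspace spanned by the differences preserves negative definiteness.
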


\begin{proof}
Let $S \to M$ and $D_1,\ldots,D_n$ be as in \autoref{notation:relative-curve}.
For $i=2,\ldots,n$, let $P_i = (D_i-D_1)|_{S_\eta}$.
Since $S \to M$ has irreducible fibers, we can always take $\Phi=0$ in \autoref{theorem:canonical-heights}\eqref{I:fibral adjustment}.
By \autoref{lemma:self-intersection}, the matrix $(D_i\cdot D_j)_{1 \leq i,j
\leq n}$ is $dI$ for some $d<0$,
so $((D_i-D_1) \cdot( D_j-D_1))_{2 \le i,j \le n}$ is negative definite too.
By \autoref{theorem:canonical-heights}\eqref{I:canonical height and intersection pairing}, 
$(\langle P_i,P_j \rangle)$ is \emph{positive} definite.
For $t \in M(\Qbar)$, the following are equivalent:
\begin{itemize}
\item $t$ belongs to the subset $M(\Qbar)'$ defined in~\eqref{E:M(K)'};
\item $D_1|_{S_t},\ldots,D_n|_{S_t}$ are $\Z$-dependent in $\Pic(S_t)$;
\item $P_2,\ldots,P_n$ are $\Z$-dependent in $\Pic^0(S_t)$.
\end{itemize}
By \autoref{theorem:canonical-heights}\eqref{I:independence of specializations},
the set of such $t$ is of bounded height.
Thus $M(\Qbar)'$ is of bounded height,
and so is its image in $X(\Qbar)$.
Taking the union over the finitely many irreducible components $M$ of $\MM$ shows that $X(\Qbar)_{\specialK,G}$ is of bounded height.
\end{proof}

\begin{remark}
To prove the weaker statement that $X(\Qbar) \setminus X(\Qbar)_{\specialK,G}$
is infinite, one could use N\'eron's specialization theorem \cite[Th\'eor\`eme~6]{Neron1952} in place of \cite[Theorem~B]{Silverman1983}.
\end{remark}

\begin{question}
	\label{question:}
	Can we refine the proof of \autoref{theorem:bounded-height} to obtain a height bound that is uniform in $G$?
If so, then \autoref{question:Bilu} has a negative answer.
\end{question}


\section*{Acknowledgments} 

Part of the research for this article was carried out at the 2023 ``Arithmetic, Algebra, and Algorithms'' workshop at the ICMS in Edinburgh.
We heartily thank Umberto Zannier for explaining to us the relevance of Silverman's article \cite{Silverman1983} (building on the specialization theorems of Dem$'$janenko and Manin) for proving \autoref{theorem:bounded-height}, and for spending the time to write up for us a new, more elementary proof of the specialization theorem in the case of constant abelian schemes, inspired by the ``alternative proof'' of \cite[Theorem~1]{Bombieri-Masser-Zannier2008}.
We also thank Yuri Bilu, Noam Elkies, Benedict Gross, Joseph Harris, Will Sawin, Joseph Silverman, and Yuri Zarhin for helpful correspondence.

\bibliographystyle{alpha}
\bibliography{master}

\def\cprime{$'$} \providecommand{\noopsort}[1]{}
\begin{thebibliography}{ACGH85}

\bibitem[ACGH85]{ACGH:I}
E.~Arbarello, M.~Cornalba, P.~A. Griffiths, and J.~Harris.
\newblock {\em Geometry of algebraic curves. {V}ol. {I}}, volume 267 of {\em
  Grundlehren der Mathematischen Wissenschaften [Fundamental Principles of
  Mathematical Sciences]}.
\newblock Springer-Verlag, New York, 1985.

\bibitem[Bak66]{Baker1966I-IV}
A.~Baker.
\newblock Linear forms in the logarithms of algebraic numbers. {I}, {II},
  {III}.
\newblock {\em Mathematika}, 13:204--216; ibid. 14 (1967), 102--107; ibid. 14
  (1967), 220--228, 1966.

\bibitem[Bak68a]{Baker1967I}
A.~Baker.
\newblock Contributions to the theory of {D}iophantine equations. {I}. {O}n the
  representation of integers by binary forms.
\newblock {\em Philos. Trans. Roy. Soc. London Ser. A}, 263:173--191, 1968.

\bibitem[Bak68b]{Baker1967II}
A.~Baker.
\newblock Contributions to the theory of {D}iophantine equations. {II}. {T}he
  {D}iophantine equation {$y^{2}=x^{3}+k$}.
\newblock {\em Philos. Trans. Roy. Soc. London Ser. A}, 263:193--208, 1968.

\bibitem[Bak68c]{Baker1968}
A.~Baker.
\newblock The {D}iophantine equation {$y^{2}=ax^{3}+bx^{2}+cx+d$}.
\newblock {\em J. London Math. Soc.}, 43:1--9, 1968.

\bibitem[Bak69]{Baker1969}
A.~Baker.
\newblock Bounds for the solutions of the hyperelliptic equation.
\newblock {\em Proc. Cambridge Philos. Soc.}, 65:439--444, 1969.

\bibitem[BC70]{Baker-Coates1970}
A.~Baker and J.~Coates.
\newblock Integer points on curves of genus 1.
\newblock {\em Proc. Cambridge Philos. Soc.}, 67:595--602, 1970.

\bibitem[Bel79]{Belyi1979}
G.~V. Bely\u{\i}.
\newblock Galois extensions of a maximal cyclotomic field.
\newblock {\em Izv. Akad. Nauk SSSR Ser. Mat.}, 43(2):267--276, 479, 1979.

\bibitem[Bil95]{Bilu1995}
Yuri Bilu.
\newblock Effective analysis of integral points on algebraic curves.
\newblock {\em Israel J. Math.}, 90(1-3):235--252, 1995.

\bibitem[BMZ08]{Bombieri-Masser-Zannier2008}
Enrico Bombieri, David Masser, and Umberto Zannier.
\newblock Intersecting a plane with algebraic subgroups of multiplicative
  groups.
\newblock {\em Ann. Sc. Norm. Super. Pisa Cl. Sci. (5)}, 7(1):51--80, 2008.

\bibitem[Dem68]{Demjanenko1968}
V.~A. Dem\cprime{ }janenko.
\newblock An estimate of the remainder term in {T}ate's formula.
\newblock {\em Mat. Zametki}, 3:271--278, 1968.

\bibitem[Gro86]{Gross1986}
Benedict~H. Gross.
\newblock Local heights on curves.
\newblock In {\em Arithmetic geometry ({S}torrs, {C}onn., 1984)}, pages
  327--339. Springer, New York, 1986.

\bibitem[KS99]{Katz-Sarnak1999}
Nicholas~M. Katz and Peter Sarnak.
\newblock {\em Random matrices, {F}robenius eigenvalues, and monodromy},
  volume~45 of {\em American Mathematical Society Colloquium Publications}.
\newblock American Mathematical Society, Providence, RI, 1999.

\bibitem[LL22a]{landesmanL:applications-putman-wieland}
Aaron Landesman and Daniel Litt.
\newblock Applications of the algebraic geometry of the {P}utman-{W}ieland
  conjecture.
\newblock {\em arXiv preprint arXiv:2209.00718v2}, 2022.

\bibitem[LL22b]{landesmanL:introduction-putman-wieland}
Aaron Landesman and Daniel Litt.
\newblock An introduction to the algebraic geometry of the {P}utman-{W}ieland
  conjecture.
\newblock {\em arXiv preprint arXiv:2209.0071v1}, 2022.

\bibitem[LL22c]{landesmanL:prill}
Aaron Landesman and Daniel Litt.
\newblock Prill's problem.
\newblock {\em arXiv preprint arXiv:2209.12958v1}, 2022.

\bibitem[Man64]{Manin1964}
Ju.~I. Manin.
\newblock The {T}ate height of points on an {A}belian variety, its variants and
  applications.
\newblock {\em Izv. Akad. Nauk SSSR Ser. Mat.}, 28:1363--1390, 1964.

\bibitem[Man69]{Manin1969}
Ju.~I. Manin.
\newblock The {$p$}-torsion of elliptic curves is uniformly bounded.
\newblock {\em Izv. Akad. Nauk SSSR Ser. Mat.}, 33:459--465, 1969.

\bibitem[Mar22]{markovic}
Vladimir Markovi\'{c}.
\newblock Unramified correspondences and virtual properties of mapping class
  groups.
\newblock {\em Bull. Lond. Math. Soc.}, 54(6):2324--2337, 2022.

\bibitem[{N}{\'e}r52]{Neron1952}
Andr\'{e} {N}{\'e}ron.
\newblock Probl\`emes arithm\'{e}tiques et g\'{e}om\'{e}triques rattach\'{e}s
  \`a la notion de rang d'une courbe alg\'{e}brique dans un corps.
\newblock {\em Bull. Soc. Math. France}, 80:101--166, 1952.

\bibitem[{N}{\'{e}}r65]{Neron1965}
A.~{N}{\'{e}}ron.
\newblock Quasi-fonctions et hauteurs sur les vari\'{e}t\'{e}s ab\'{e}liennes.
\newblock {\em Ann. of Math. (2)}, 82:249--331, 1965.

\bibitem[PW13]{putmanW:abelian-quotients}
Andrew Putman and Ben Wieland.
\newblock Abelian quotients of subgroups of the mappings class group and higher
  {P}rym representations.
\newblock {\em J. Lond. Math. Soc. (2)}, 88(1):79--96, 2013.

\bibitem[Ser97]{serre:lectures-on-the-mordell-weil-theorem}
Jean-Pierre Serre.
\newblock {\em Lectures on the {M}ordell-{W}eil theorem}.
\newblock Aspects of Mathematics. Friedr. Vieweg \& Sohn, Braunschweig, third
  edition, 1997.
\newblock Translated from the French and edited by Martin Brown from notes by
  Michel Waldschmidt, With a foreword by Brown and Serre.

\bibitem[Sie29]{Siegel1929}
Carl~Ludwig Siegel.
\newblock \"{U}ber einige {A}nwendungen diophantischer {A}pproximationen.
\newblock {\em Abh.\ Preu\ss .\ Akad.\ Wissen.\ Phys.-math.\ Klasse}, pages
  41--69, 1929.
\newblock English translation: \emph {On some applications of diophantine
  approximations}, edited by Umberto Zannier, Scuola Normale Superiore Pisa,
  2014.

\bibitem[Sil83]{Silverman1983}
Joseph~H. Silverman.
\newblock Heights and the specialization map for families of abelian varieties.
\newblock {\em J. Reine Angew. Math.}, 342:197--211, 1983.

\bibitem[Wew98]{wewers:thesis}
Stefan Wewers.
\newblock {\em Construction of Hurwitz spaces}.
\newblock Institut für Experimentelle Mathematik Essen, Ph.D.\ thesis, 1998.

\end{thebibliography}

\end{document}